\documentclass{article}

\usepackage{fullpage}
\usepackage{amsthm}
\usepackage{amsmath}
\usepackage{enumerate}

\usepackage[utf8]{inputenc} 

\usepackage{graphicx}
\usepackage{caption}
\usepackage{subcaption}

\captionsetup[subfigure]{labelfont=bf,textfont=normalfont,singlelinecheck=off}
\captionsetup{compatibility=false}
\usepackage[justification=centering]{caption}

\newtheorem{theorem}{Theorem}[section]

\newtheorem{corollary}[theorem]{Corollary}

\newtheorem{definition}{Definition}[section]
\newtheorem{example}[theorem]{Example}

\begin{document}
	
	\title{Construction of a surface pencil with a common special surface curve}
	
	\author{Onur Kaya$^{1}$, Mehmet Önder$^{2}$ \\[3mm] {\it $^{1}$ Manisa Celal Bayar University, Department of Mathematics, 45140, Manisa, Turkey} \\ {\it $^{2}$ Independent Researcher, Delibekirli Village, 31440, Kırıkhan, Hatay, Turkey} \date{}}
	
	\maketitle
	
	\begin{abstract}
		In this study, we introduce a new type of surface curves called $D$-type curve. This curve is defined by the property that the unit Darboux vector $\vec{W}_{0} $ of a surface curve $\vec{r}(s)$ and unit surface normal $\vec{n} $ along the curve $\vec{r}(s)$ satisfy the condition $\left\langle \vec{n} ,\vec{W}_{0} \right\rangle =\text{constant}$. We point out that a $D$-type curve is a geodesic curve or an asymptotic curve in some special cases. Then, by using the Frenet vectors and parametric representation of a surface pencil as a linear combination of the Frenet vectors, we investigate necessary and sufficient condition for a curve to be a $D$-type curve on a surface pencil. Moreover, we introduce some corollaries by considering the $D$-type curve as a helix, a Salkowski curve or a planar curve. Finally, we give some examples for the obtained results.
	\end{abstract}
	
	\textbf{AMS Classsification:} 53A04, 53A05, 65D17, 68U0
	
	\textbf{Keywords:} Surface pencil, $D$-type curve, Parametric representation, Marching-scale function, Surface curve, Frenet frame.

	\section{Introduction}
	\label{intro}
	
	A curve lying on a surface is called a surface curve and a surface can be thought as a family of surface curves. Then, a surface can be constructed by a curve. Of course, that curve may be the set of points of intersection of some different surfaces, i.e., different surfaces can be constructed by the same curve. Especially, some special surface curves lying on a surface are useful tools of differential geometry and computer aided design to study the local geometry of and construction of the surfaces. The most famous surface curves are geodesics, lines of curvature and asymptotic curves and they are used widely in surface design. So, there exist a number of papers which deal with these curves. A geodesic on a surface is the shortest distance between two points of surface on which the curve passing through and a geodesic curve is characterized by vanishing geodesic curvature. Geodesics have many applications in computer vision and image processing, such as in object segmentation \cite{6,18,5} and multi-scale image analysis \cite{17,20}. The concept of geodesic also finds its place in various industrial applications, such as tent manufacturing, cutting and painting path, fiberglass tape windings in pipe manufacturing and textile manufacturing \cite{27,10,3,13,14,11,12,4}.
	
	The other famous special curves on a surface are line of curvature and asymptotic curve. A line of curvature is defined by the property that at all points of the curve the velocity vectors coincide with principal directions of the surface along the curve. So, a line of curvature is a useful tool in surface analysis for exhibiting variations of the principal direction and can be used for the analysis of surfaces, Geometric Design, Shape Recognition, polygonization of surfaces and Surface Rendering \cite{23,1,21}. Moreover, a surface curve is called asymptotic curve if its velocity always points in an asymptotic direction, that is, the direction in which the normal curvature is zero and these curves are used in astronomy, astro-physics and architectural CAD \cite{8,7,9}.
	
	The surface curves given above are also used to construct a surface pencil. Wang and et al have given a method to construct a surface pencil by using common spatial geodesics \cite{26}. Later, Kasap and et al have generalized this method \cite{16}. Then, Li and et al have considered Wang's method for a surface pencil with a common line of curvature \cite{19} and the same method has been considered for a common asymptotic curve by Bayram and et al \cite{2}. 
	
	Moreover, some different special curves can be defined on a surface and a surface pencil can be constructed by these special surface curves. The goal of this paper is to show that a parametric representation of a surface pencil with a common new special surface curve can be given. For this purpose, we define a new surface curve called $D$-type curve and we introduce the necessary and sufficient condition for a given curve to be a $D$-type curve on a parametric surface. We also give a generalization of the marching-scale functions and obtain a sufficient condition for them to satisfy the parametric requirement. Moreover, the obtained results are applied to some special curves such as helix, Salkowski curve and planar curve.
	
	\section{Preliminaries and $D$-type curves}
	\label{sec:1}
	A unit speed curve $\vec{r}(s)$ with arc-length parameter $s$ is said to be regular if its first derivative with respect to $s$ is non-zero, i.e., $\vec{r}'=d\vec{r}/ds\ne \vec{0}$. From now on, all the curves are assumed to be regular in this paper. If the second derivative is zero, then the curve is called a straight line which means that we assume $d^{2} \vec{r}/ds^{2} \ne \vec{0}$ for all $s\in [0,L]$. Such curves are called Frenet curves. Then, on every point of a Frenet curve $\vec{r}(s)$, there is a frame $\{ \vec{T},\vec{N},\vec{B}\} $ which is called the Frenet frame of curve where $\vec{T}=\vec{r}'$, $\vec{N}=\vec{r}''/\left\| \vec{r''}\right\| $ and $\vec{B}=\vec{T}\times \vec{N}$ are the unit tangent, the principal normal and the binormal vectors of the curve, respectively. The Frenet formulae of $\vec{r}(s)$ with respect to $s$ is given by
	\begin{equation} \label{2_1} 
	\left[\begin{array}{c} {\vec{T}'} \\ {\vec{N}'} \\ {\vec{B}'} \end{array}\right]=\left[\begin{array}{ccc} {0} & {\kappa } & {0} \\ {-\kappa } & {0} & {\tau } \\ {0} & {-\tau } & {0} \end{array}\right]\left[\begin{array}{c} {\vec{T}} \\ {\vec{N}} \\ {\vec{B}} \end{array}\right], 
	\end{equation}
	where $\kappa =\kappa (s)$ and $\tau =\tau (s)$ are the curvature and the torsion functions of the curve and they are given by $\kappa =\left\| \vec{r''}\right\| $ and $\tau =\det (\vec{r}',\vec{r}'',\vec{r}''')/\left\| \vec{r}''\right\| ^{2} $, respectively \cite{25}. The vector $\vec{W}_{0} $ defined by 
	\[\vec{W}_{0} =\frac{\tau }{\sqrt{\kappa ^{2} +\tau ^{2} } } \vec{T}+\frac{\kappa }{\sqrt{\kappa ^{2} +\tau ^{2} } } \vec{B},\]
	is called unit Darboux vector of the curve \cite{25}.
	
	A unit speed curve $\vec{\alpha }(s):I\to E^{3} $ with unit tangent $\vec{T}(s)=\vec{\alpha '}(s)$ is called a general helix if there is a constant unit vector $\vec{u}$, so that $\left\langle \vec{T},\vec{u}\right\rangle =\cos \theta $ is constant along the curve, where $\theta \ne \pi /2$. It is well-known that the curve $\vec{\alpha }$ is a general helix if and only if $\frac{\tau }{\kappa } (s)=\text{constant}$ \cite{25}. A curve $\vec{\alpha }$ with $\kappa (s)\ne 0$ is called a slant helix if the principal normal lines of $\vec{\alpha }$ make a constant angle with a fixed direction \cite{15}.
	
	\begin{definition} \label{def21}
		(\cite{24,22}) A Frenet curve $\vec{\alpha }(s):I\to E^{3} $ with constant curvature $\kappa $ and non-constant torsion $\tau $ is called a Salkowski curve. Moreover, a Frenet curve $\vec{\alpha }(s):I\to E^{3} $ with non-constant curvature $\kappa $ and constant torsion $\tau $ is called an anti-Salkowski curve.
	\end{definition}	
	
	The principal normal vectors of a Salkowski curve make a constant angle with a fixed direction \cite{22}. By considering the definition of slant helix, it means that a Salkowski curve is also a slant helix.
	
	A surface is called to be regular if it has a tangent plane at each point. It means that the unit normal vector of a regular surface $\vec{P}:(s,t)\to \vec{P}(s,t)$ at each point is different from zero and defined by the formula
	
	\begin{equation} \label{2_2} 
	\vec{n}(s,t)=\frac{\frac{\partial \vec{P}}{\partial s} \times \frac{\partial \vec{P}}{\partial t} }{\left\| \frac{\partial \vec{P}}{\partial s} \times \frac{\partial \vec{P}}{\partial t} \right\| } . 
	\end{equation}
	
	\begin{definition} \label{def22}
		Let $\vec{P}$ be a regular surface in $E^{3} $ with surface normal $\vec{n}(s,t)$ and $\vec{r}(s)$ be a unit speed isoparametric curve on $\vec{P}$ with Frenet frame $\left\{\vec{T},\vec{N},\vec{B}\right\}$, unit Darboux vector $\vec{W}_{0} $ and let the unit surface normal along the curve $\vec{r}(s)$ be $\vec{n} $. If the unit Darboux vector of the curve $\vec{r}(s)$ and $\vec{n} $ satisfy the condition
		\begin{equation} \label{2_3} 
		\left\langle \vec{n} ,\vec{W}_{0} \right\rangle =\text{constant}, 
		\end{equation} 
		then, the curve $\vec{r}(s)$ is called a $D$-type curve on $\vec{P}$. 
	\end{definition}
	
	Since $\vec{W}_{0}$ lies on the rectifying plane of $\vec{r}(s)$, from Definition \ref{def22} we have that if $\left\langle \vec{n} ,\vec{W}_{0} \right\rangle =0$, then, the surface normal $\vec{n} $ and principal normal $\vec{N}$ are linearly dependent. It means that $\vec{r}(s)$ is a geodesic on $\vec{P}$.  Analogue to that, if $\left\langle \vec{n} ,\vec{W}_{0} \right\rangle =1$, then $\sin \theta = \frac{\sqrt{\kappa^2+\tau^2}}{\kappa}$. So, the surface normal $\vec{n}$ is orthogonal to the principal normal $\vec{N}$, i.e., the curve $\vec{r}(s)$ is an asymptotic curve on $\vec{P}$ if and only if $\left\langle \vec{n} ,\vec{W}_{0} \right\rangle =1$ and $\tau=0$, the curve is a planar curve. These special cases give that the $D$-type curves are more general than geodesics and asymptotic curves. Then, the applications of $D$-type curves contain the applications of geodesics and asymptotic planar curves.
	
	\section{Parametric representation of a surface pencil with a common $D$-type curve}
	\label{sec:2}
	Let $\vec{r}(s)$ be a Frenet curve where $s$ is the arc-length parameter. In order to construct a surface pencil where $\vec{r}(s)$ is the common $D$-type curve, we give the following parameterization of the surface $\vec{P}(s,t):[0,L]\times [0,T]\to E^{3} $ as follows
	\begin{equation} \label{3_1} 
	\vec{P}(s,t)=\vec{r}(s)+u(s,t)\vec{T}(s)+v(s,t)\vec{N}(s)+w(s,t)\vec{B}(s), 
	\end{equation} 
	where $0\le s\le L$, $0\le t\le T$; $u(s,t)$, $v(s,t)$ and $w(s,t)$ are smooth functions and their values indicate, respectively, the extension-like, flexion-like, and retortion-like effects, by the point unit through the time $t$, starting from $\vec{r}(s)$ \cite{26}. In this study, we aim to introduce necessary and sufficient condition for the given curve $\vec{r}(s)$ to be a $D$-type curve on the surface $\vec{P}(s,t)$ where $\vec{P}(s,t)$is assumed as given in the form (\ref{3_1}). 
	
	The unit surface normal along the curve $\vec{r}(s)$ satisfies that $\vec{n} =\cos \theta \vec{N}+\sin \theta \vec{B}$ where the vectors $\vec{N},\, \, \vec{B}$ are principal normal and binormal vectors of $\vec{r}(s)$, respectively. Now, we investigate the necessary and sufficient condition for an isoparametric curve $\vec{r}(s)$ to be a common $D$-type curve on a surface pencil $\vec{P}(s,t)$. Since the curve $\vec{r}(s)$ is an isoparametric curve on $\vec{P}(s,t)$, there is a parameter $t_{0} \in \left[0,T\right]$ such that
	\begin{equation} \label{3_2} 
	\vec{P}(s,t_{0} )=\vec{r}{\kern 1pt} (s),\, \, \, \, \, \, \, \, 0\le t_{0} \le T,\, \, \, \, \, 0\le s\le L. 
	\end{equation} 
	In this paper, we assume that $t_{0} =0$. Then we get $u(s,t_{0} )=v(s,t_{0} )=w(s,t_{0} )=0$. Next, from Definition \ref{def22}, the curve $\vec{r}(s)$ is a $D$-type curve if the surface normal along curve and unit Darboux vector of curve satisfy the condition given by (\ref{2_3}). Therefore, we compute the surface normal by using (\ref{2_2}). In order to do that, using the Frenet formula we obtain the following derivations
	\[\begin{array}{l} {\frac{\partial \vec{P}(s,t)}{\partial s} =\left[1-\kappa (s)v(s,t)+\frac{\partial u(s,t)}{\partial s} \right]\vec{T}(s)} \\ {\qquad \, \, \, \, +\left[\kappa (s)u(s,t)-\tau (s)w(s,t)+\frac{\partial v(s,t)}{\partial s} \right]\vec{N}(s)} \\ {\qquad \, \, \, \, +\left[\tau (s)v(s,t)+\frac{\partial w(s,t)}{\partial s} \right]\vec{B}(s)} \end{array}\] 
	and
	\[\frac{\partial \vec{P}(s,t)}{\partial t} =\frac{\partial u(s,t)}{\partial t} \vec{T}(s)+\frac{\partial v(s,t)}{\partial t} \vec{N}(s)+\frac{\partial w(s,t)}{\partial t} \vec{B}(s).\]
	
	Considering $t=t_0$, we have $u(s,t_{0} )=v(s,t_{0} )=w(s,t_{0} )=0$. By the definition of partial derivative, we obtain
	\begin{equation*}
	\frac{\partial u(s,t_0)}{\partial s}=\frac{\partial v(s,t_0)}{\partial s}=\frac{\partial w(s,t_0)}{\partial s}=0
	\end{equation*}
	which leads us to following equations
	\[\left\{\begin{array}{l} {\frac{\partial \vec{P}(s,t_0)}{\partial s} =\vec{T}(s)}, \\ {\frac{\partial \vec{P}(s,t_0)}{\partial t} =\frac{\partial u(s,t_0)}{\partial t} \vec{T}(s)+\frac{\partial v(s,t_0)}{\partial t} \vec{N}(s)+\frac{\partial w(s,t_0)}{\partial t} \vec{B}(s).} \end{array}\right.\]
	
	Therefore, we have
	\begin{equation} \label{3_3}
	\vec{n}(s,t_0)=\frac{\frac{\partial \vec{P}}{\partial s} \times \frac{\partial \vec{P}}{\partial t} }{\left\| \frac{\partial \vec{P}}{\partial s} \times \frac{\partial \vec{P}}{\partial t} \right\|} =\varphi _{1} (s,t_{0} )\vec{T}(s)+\varphi _{2} (s,t_{0} )\vec{N}(s)+\varphi _{3} (s,t_{0} )\vec{B}(s),
	\end{equation}
	where
	\begin{equation} \label{3_4}
	\varphi _{1} (s,t_{0} )=0,
	\end{equation}
	\begin{equation} \label{3_5}
	\varphi _{2} (s,t_{0} )= -\left\lbrace \left[ \frac{\partial v(s,t_{0} )}{\partial s} \right]^2 + \left[ \frac{\partial w(s,t_{0} )}{\partial s} \right]^2 \right\rbrace^{-1/2}\frac{\partial w(s,t_{0} )}{\partial s},
	\end{equation}
	\begin{equation} \label{3_6}
	\varphi _{3} (s,t_{0} )= \left\lbrace \left[ \frac{\partial v(s,t_{0} )}{\partial s} \right]^2 + \left[ \frac{\partial w(s,t_{0} )}{\partial s} \right]^2 \right\rbrace^{-1/2}\frac{\partial v(s,t_{0} )}{\partial s}.
	\end{equation}
	Since $\vec{n}(s,t_{0} )$, $0\le s\le L$, is unit, we also have
	\begin{equation} \label{3_7}
	\varphi _{1} (s,t_{0} )=0,\, \, \, \, \, \varphi _{2} (s,t_{0} )=\cos \theta ,\, \, \, \, \varphi _{3} (s,t_{0} )=\sin \theta.
	\end{equation} 
	Then, we give the following theorem:
	
	\begin{theorem} \label{thm31}
		A given curve $\vec{r}(s)$ is a $D$-type curve on the surface $\vec{P}(s,t)$ if and only if
		\[\left\{\begin{array}{l} {u(s,t_{0} )=v(s,t_{0} )=w(s,t_{0} )=0,} \\ {\varphi _{1} (s,t_{0} )=0,\, \, \, \, \, \, \varphi _{2} (s,t_{0} )=\pm \sqrt{1-\frac{c^{2} (\kappa ^{2} +\tau ^{2} )}{\kappa ^{2} } } ,} \\ {\varphi _{3} (s,t_{0} )=c\frac{\sqrt{\kappa ^{2} +\tau ^{2} } }{\kappa } .} \end{array}\right. \] 
		satisfy where $0\le t_{0} \le T,\, \, \, \, 0\le s\le L$, $c$ is a real constant, $\kappa ,\, \, \tau $ are the curvature and the torsion functions of the curve $\vec{r}(s)$, respectively.
	\end{theorem}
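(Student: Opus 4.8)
The plan is to feed the coordinate expression (\ref{3_3})--(\ref{3_7}) for the unit surface normal along $\vec{r}(s)$ into the explicit formula for the unit Darboux vector, and then to translate the defining relation (\ref{2_3}) into conditions on $\varphi _1,\varphi _2,\varphi _3$. Everything here is the Frenet machinery already set up before the statement; there is no deep obstacle, only a careful bookkeeping of the orthonormal expansion and one consistency remark.

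First I would treat the ``only if'' direction. If $\vec{r}(s)$ is a $D$-type curve on $\vec{P}(s,t)$ then in particular it is an isoparametric curve, so (\ref{3_2}) forces $u(s,t_{0})=v(s,t_{0})=w(s,t_{0})=0$, and the cross-product computation preceding the theorem gives (\ref{3_3}) with $\varphi _1(s,t_{0})=0$ together with $\varphi _2(s,t_{0})=\cos\theta$, $\varphi _3(s,t_{0})=\sin\theta$ by (\ref{3_7}); hence $\vec{n}(s,t_{0})=\cos\theta\,\vec{N}(s)+\sin\theta\,\vec{B}(s)$. Pairing this with $\vec{W}_{0}=\frac{\tau}{\sqrt{\kappa^{2}+\tau^{2}}}\vec{T}+\frac{\kappa}{\sqrt{\kappa^{2}+\tau^{2}}}\vec{B}$ and using orthonormality of $\{\vec{T},\vec{N},\vec{B}\}$ yields
\[
\left\langle \vec{n},\vec{W}_{0}\right\rangle=\frac{\kappa}{\sqrt{\kappa^{2}+\tau^{2}}}\,\sin\theta=\frac{\kappa}{\sqrt{\kappa^{2}+\tau^{2}}}\,\varphi _3(s,t_{0}).
\]
By Definition \ref{def22} this must equal a real constant $c$, which gives $\varphi _3(s,t_{0})=c\sqrt{\kappa^{2}+\tau^{2}}/\kappa$. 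Since $\vec{n}(s,t_{0})$ is a unit vector and $\varphi _1(s,t_{0})=0$, we have $\varphi _2(s,t_{0})^{2}+\varphi _3(s,t_{0})^{2}=1$, so $\varphi _2(s,t_{0})=\pm\sqrt{1-c^{2}(\kappa^{2}+\tau^{2})/\kappa^{2}}$, which is the asserted system. The ``if'' direction is obtained by reversing these steps: $u(s,t_{0})=v(s,t_{0})=w(s,t_{0})=0$ makes $\vec{r}(s)=\vec{P}(s,t_{0})$ isoparametric, the prescribed values of $\varphi _1,\varphi _2,\varphi _3$ pin down $\vec{n}(s,t_{0})=\varphi _2\vec{N}+\varphi _3\vec{B}$, and the same inner-product computation returns $\left\langle \vec{n},\vec{W}_{0}\right\rangle=c$, a constant, so $\vec{r}(s)$ is a $D$-type curve.

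The calculation is essentially routine, so the ``hard part'' is really only a matter of flagging two subtleties. First, the expression under the radical defining $\varphi _2$ must be nonnegative; this is automatic, because $\left\langle \vec{n},\vec{W}_{0}\right\rangle=\frac{\kappa}{\sqrt{\kappa^{2}+\tau^{2}}}\sin\theta$ has absolute value at most $\kappa/\sqrt{\kappa^{2}+\tau^{2}}$ pointwise, forcing $c^{2}(\kappa^{2}+\tau^{2})\le\kappa^{2}$. Second, the sign in $\varphi _2(s,t_{0})$ may in principle vary with $s$, so the statement should be understood pointwise (equivalently, one restricts to a connected arc on which the orientation of $\vec{n}$ relative to $\vec{B}$ is fixed). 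Beyond these remarks nothing more than the Frenet formulas (\ref{2_1}) and the normalization (\ref{3_7}) is needed.
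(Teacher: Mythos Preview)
Your proposal is correct and follows essentially the same approach as the paper: both argue that along the isoparametric curve the unit normal has the form $\vec{n}(s,t_{0})=\varphi_2\vec{N}+\varphi_3\vec{B}$, pair it with $\vec{W}_0$ to extract $\varphi_3=c\sqrt{\kappa^{2}+\tau^{2}}/\kappa$, and then use the unit-length constraint to obtain $\varphi_2$. The only cosmetic difference is that the paper first isolates $\langle\vec{n},\vec{B}\rangle$ while you compute $\langle\vec{n},\vec{W}_0\rangle$ directly; your additional remarks on the nonnegativity of the radicand and the pointwise nature of the sign in $\varphi_2$ are not in the paper but are valid clarifications.
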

	
	\begin{proof}
		Let $\vec{r}(s)$ be a $D$-type curve on surface pencil $\vec{P}(s,t)$. From Definition \ref{def22}, we have
		\begin{equation} \label{3_8} 
		\left\langle \vec{n} ,\vec{W}_{0} \right\rangle =c,                        
		\end{equation} 
		where $c$ is a real constant. Since the surface normal is orthogonal to tangent of curve at the same points, from (\ref{3_8}) we obtain
		\[\left\langle \vec{n} ,\vec{B}\right\rangle =c\frac{\sqrt{\kappa ^{2} +\tau ^{2} } }{\kappa } .\] 
		Therefore, we can write
		\[\left\langle \varphi _{2} (s,t_{0} )\vec{N}(s)+\varphi _{3} (s,t_{0} )\vec{B}(s),\, \, \, \vec{B}\right\rangle =c\frac{\sqrt{\kappa ^{2} +\tau ^{2} } }{\kappa } .\] 
		Then we get
		\[\varphi _{3} (s,t_{0} )= \sin \theta =c\frac{\sqrt{\kappa ^{2} +\tau ^{2} } }{\kappa },\] 
		and from (\ref{3_7}), we obtain
		\[\varphi _{2} (s,t_{0} )=\pm \sqrt{1-\frac{c^{2} (\kappa ^{2} +\tau ^{2} )}{\kappa ^{2} } } .\] 
		On the other hand, from (\ref{3_1}), we get $u(s,t_{0} )=v(s,t_{0} )=w(s,t_{0} )=0$ and that finishes the proof of the necessary condition.
		
		Conversely, let
		\[\left\{\begin{array}{l} {u(s,t_{0} )=v(s,t_{0} )=w(s,t_{0} )=0,} \\ {\varphi _{1} (s,t_{0} )=0,\, \, \, \, \, \, \varphi _{2} (s,t_{0} )=\pm \sqrt{1-\frac{c^{2} (\kappa ^{2} +\tau ^{2} )}{\kappa ^{2} } } ,} \\ {\varphi _{3} (s,t_{0} )=c\frac{\sqrt{\kappa ^{2} +\tau ^{2} } }{\kappa } .} \end{array}\right. \] 
		satisfy. Then, the equation (\ref{3_1}) holds and for the surface normal along curve, we have
		\[\vec{n}(s,t_{0} )=\pm \sqrt{1-\frac{c^{2} (\kappa ^{2} +\tau ^{2} )}{\kappa ^{2} } } \vec{N}+c\frac{\sqrt{\kappa ^{2} +\tau ^{2} } }{\kappa } \vec{B},\] 
		and it follows that
		\[\left\langle \vec{n}(s,t_{0} ),\vec{W}_{0} \right\rangle =c=\text{constant}\] 
		Thus, the curve $\vec{r}(s)$ is a $D$-type curve on surface pencil $\vec{P}(s,t)$.
	\end{proof}
	
	From Theorem \ref{thm31}, we have the following corollaries:
	
	\begin{corollary} \label{cor32}
		Let the curve $\vec{r}(s)$ be a $D$-type curve on the surface $\vec{P}(s,t)$. Then, $\vec{r}(s)$ is a Salkowski curve (or a slant helix) with constant curvature $\kappa =a\in {R}$ and non-constant torsion $\tau (s)$ on $\vec{P}(s,t)$ if and only if
		\[\left\{\begin{array}{l} {u(s,t_{0} )=v(s,t_{0} )=w(s,t_{0} )=0,} \\ {\varphi _{1} (s,t_{0} )=0,\, \, \, \, \, \, \varphi _{2} (s,t_{0} )=\pm \sqrt{1-\frac{c^{2} (a^{2} +\tau ^{2} )}{a^{2} } } ,} \\ {\varphi _{3} (s,t_{0} )=c\frac{\sqrt{a^{2} +\tau ^{2} } }{a} .} \end{array}\right. \] 
		hold, where $0\le t_{0} \le T,\, \, \, \, 0\le s\le L$ and $c$ is a real constant.
	\end{corollary}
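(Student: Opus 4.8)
The plan is to obtain this corollary as a direct specialization of Theorem~\ref{thm31}, since being a Salkowski curve is an intrinsic property of $\vec{r}(s)$ that only restricts its curvature and torsion, whereas Theorem~\ref{thm31} already encodes the full $D$-type condition in terms of $\kappa$ and $\tau$. So essentially no new computation is needed beyond a substitution.

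First I would invoke the standing hypothesis that $\vec{r}(s)$ is a $D$-type curve on $\vec{P}(s,t)$. By Theorem~\ref{thm31} this is equivalent to
\[
u(s,t_0)=v(s,t_0)=w(s,t_0)=0,\quad \varphi_1(s,t_0)=0,\quad \varphi_2(s,t_0)=\pm\sqrt{1-\frac{c^2(\kappa^2+\tau^2)}{\kappa^2}},\quad \varphi_3(s,t_0)=c\frac{\sqrt{\kappa^2+\tau^2}}{\kappa},
\]
for some real constant $c$, where $\kappa,\tau$ are the curvature and torsion of $\vec{r}(s)$. Next I would use Definition~\ref{def21}: $\vec{r}(s)$ is a Salkowski curve exactly when its curvature is a nonzero constant, say $\kappa\equiv a$, and its torsion $\tau(s)$ is non-constant; and by the remark following Definition~\ref{def21} such a curve is automatically a slant helix, which accounts for the parenthetical in the statement. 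Substituting $\kappa=a$ into the expressions for $\varphi_2(s,t_0)$ and $\varphi_3(s,t_0)$ above produces precisely
\[
\varphi_2(s,t_0)=\pm\sqrt{1-\frac{c^2(a^2+\tau^2)}{a^2}},\qquad \varphi_3(s,t_0)=c\frac{\sqrt{a^2+\tau^2}}{a},
\]
which are the conditions asserted in the corollary, so the forward implication follows. For the converse, if these equalities hold together with the vanishing of $u,v,w,\varphi_1$ at $t_0$ and with $\kappa=a$ constant, $\tau$ non-constant, then reading Theorem~\ref{thm31} backwards shows $\vec{r}(s)$ is a $D$-type curve whose curvature equals the constant $a$ while its torsion is non-constant, i.e.\ a Salkowski curve; hence both directions hold.

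The only point requiring a little care — and the closest thing to an obstacle — is the bookkeeping of admissibility: the radicand $1-c^2(a^2+\tau^2)/a^2$ must remain nonnegative along $\vec{r}(s)$, equivalently $c^2\le a^2/(a^2+\tau^2)$ for all $s\in[0,L]$, so that $\varphi_2(s,t_0)$ is real and $\vec{n}(s,t_0)$ is a genuine unit vector. This constraint is inherited verbatim from Theorem~\ref{thm31} and should simply be recorded. Apart from that, the proof is just the substitution $\kappa=a$, so no essentially new argument is needed.
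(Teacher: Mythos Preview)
Your proposal is correct and mirrors the paper's approach: the paper simply records Corollary~\ref{cor32} as an immediate consequence of Theorem~\ref{thm31} with no separate proof, and your argument is exactly the substitution $\kappa=a$ into the conditions of that theorem. The admissibility remark about the radicand is a reasonable extra observation but is not addressed in the paper.
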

	
	\begin{corollary} \label{cor33}
		Let the curve $\vec{r}(s)$ be a $D$-type curve on $\vec{P}(s,t)$. Then, $\vec{r}(s)$ is an anti-Salkowski curve with non-constant curvature $\kappa (s)$ and constant torsion $\tau =b\in {R}$ on $\vec{P}(s,t)$ if and only if
		\[\left\{\begin{array}{l} {u(s,t_{0} )=v(s,t_{0} )=w(s,t_{0} )=0,} \\ {\varphi _{1} (s,t_{0} )=0,\, \, \, \, \, \, \varphi _{2} (s,t_{0} )=\pm \sqrt{1-\frac{c^{2} (\kappa ^{2} +b^{2} )}{\kappa ^{2} } } ,} \\ {\varphi _{3} (s,t_{0} )=c\frac{\sqrt{\kappa ^{2} +b^{2} } }{\kappa } .} \end{array}\right. \] 
		hold, where $0\le t_{0} \le T,\, \, \, \, 0\le s\le L$ and $c$ is a real constant.
	\end{corollary}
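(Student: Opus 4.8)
The plan is to obtain Corollary~\ref{cor33} as a direct specialization of Theorem~\ref{thm31}. First I would recall that, by Definition~\ref{def21}, a Frenet curve is an anti-Salkowski curve exactly when its torsion is a constant $\tau=b\in{R}$ while its curvature $\kappa=\kappa(s)$ is non-constant. Since the standing hypothesis of the corollary is that $\vec{r}(s)$ is already a $D$-type curve on $\vec{P}(s,t)$, Theorem~\ref{thm31} applies and yields $u(s,t_{0})=v(s,t_{0})=w(s,t_{0})=0$ together with $\varphi_{1}(s,t_{0})=0$, $\varphi_{2}(s,t_{0})=\pm\sqrt{1-c^{2}(\kappa^{2}+\tau^{2})/\kappa^{2}}$ and $\varphi_{3}(s,t_{0})=c\sqrt{\kappa^{2}+\tau^{2}}/\kappa$ for some real constant $c$.

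For the forward implication I would simply substitute $\tau\equiv b$ into these relations, which turns them into precisely the displayed system, so nothing further is needed. For the converse I would start from the displayed system and again invoke the standing hypothesis: because $\vec{r}(s)$ is a $D$-type curve, Theorem~\ref{thm31} furnishes $\varphi_{3}(s,t_{0})=c\sqrt{\kappa^{2}+\tau^{2}}/\kappa$ with the genuine torsion $\tau=\tau(s)$, and comparing this with the assumed value $c\sqrt{\kappa^{2}+b^{2}}/\kappa$ forces $\tau(s)^{2}=b^{2}$, hence $\tau$ is constant; combined with $\kappa$ being non-constant this makes $\vec{r}(s)$ an anti-Salkowski curve.

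The one delicate point is the degenerate case $c=0$, where $\varphi_{3}\equiv 0$ irrespective of $\tau$ and the above comparison is vacuous; there one should read the corollary as recording what the conditions of Theorem~\ref{thm31} become once the anti-Salkowski hypothesis $\tau=b$ is imposed, rather than as recovering that hypothesis. I expect this bit of logical bookkeeping, and not any computation, to be the only obstacle, since all of the algebraic content is inherited unchanged from Theorem~\ref{thm31}; the same remark explains why Corollary~\ref{cor32} follows by the dual substitution $\kappa\equiv a$.
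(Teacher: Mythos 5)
Your proposal is correct and follows exactly the route the paper intends: Corollary~\ref{cor33} is stated without proof as an immediate specialization of Theorem~\ref{thm31} under the substitution $\tau\equiv b$, which is precisely what you do. Your extra care in the converse direction (recovering $\tau^{2}=b^{2}$ from the $\varphi_{3}$ condition when $c\neq 0$, and flagging the vacuous case $c=0$) goes beyond what the paper records, but it is sound and does not change the approach.
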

	
	\begin{corollary} \label{cor34}
		Let the curve $\vec{r}(s)$ be a $D$-type curve on $\vec{P}(s,t)$. Then, $\vec{r}(s)$ is a general helix on $\vec{P}(s,t)$ if and only if
		\[\left\{\begin{array}{l} {u(s,t_{0} )=v(s,t_{0} )=w(s,t_{0} )=0,} \\ {\varphi _{1} (s,t_{0} )=0,\, \, \, \, \, \, \varphi _{2} (s,t_{0} )=\pm \sqrt{1-c^{2} (1+d^{2})} ,} \\ {\varphi _{3} (s,t_{0} )=c\sqrt{1+d^{2} } .} \end{array}\right. \] 
		hold, where $0\le t_{0} \le T,\, \, \, \, 0\le s\le L$, $c$ and $\frac{\tau }{\kappa } (s)=d$ are  real constant.
	\end{corollary}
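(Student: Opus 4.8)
The plan is to obtain Corollary~\ref{cor34} as a direct specialization of Theorem~\ref{thm31}, feeding in the curvature--torsion relation that characterizes a general helix. Since the hypothesis already says that $\vec{r}(s)$ is a $D$-type curve on $\vec{P}(s,t)$, Theorem~\ref{thm31} guarantees that the data $u,v,w,\varphi_1,\varphi_2,\varphi_3$ satisfy its three boxed conditions for some real constant $c$; in particular $\varphi_3(s,t_0)=c\sqrt{\kappa^2+\tau^2}/\kappa$ and $\varphi_2(s,t_0)=\pm\sqrt{1-c^2(\kappa^2+\tau^2)/\kappa^2}$. The only additional ingredient is the classical fact recalled in Section~\ref{sec:1}: a Frenet curve $\vec{r}(s)$ is a general helix if and only if $\tau/\kappa$ is constant.

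For the forward implication I would assume $\vec{r}(s)$ is a general helix, write $\tau/\kappa=d$ with $d$ constant, and substitute $\tau=d\kappa$ into the expressions supplied by Theorem~\ref{thm31}. Using $\kappa>0$ for a Frenet curve, $\sqrt{\kappa^2+\tau^2}/\kappa=\sqrt{\kappa^2(1+d^2)}/\kappa=\sqrt{1+d^2}$, so $\varphi_3(s,t_0)$ becomes $c\sqrt{1+d^2}$ and $\varphi_2(s,t_0)$ becomes $\pm\sqrt{1-c^2(1+d^2)}$, while $u(s,t_0)=v(s,t_0)=w(s,t_0)=0$ and $\varphi_1(s,t_0)=0$ carry over unchanged. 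This is precisely the displayed system.

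For the converse I would start from the displayed system and compare it with the conclusion of Theorem~\ref{thm31}, which is available because $\vec{r}(s)$ is a $D$-type curve. Equating the two expressions for $\varphi_3(s,t_0)$ gives $c\sqrt{\kappa^2+\tau^2}/\kappa=c\sqrt{1+d^2}$; assuming $c\neq 0$, cancel $c$ to get $\sqrt{\kappa^2+\tau^2}/\kappa=\sqrt{1+d^2}$, hence $\tau^2/\kappa^2=d^2$ is constant, hence $\tau/\kappa$ is constant (its sign cannot jump by continuity), so $\vec{r}(s)$ is a general helix by the characterization above. Consistency of the $\varphi_2$ equation is then automatic, and the conditions $u(s,t_0)=v(s,t_0)=w(s,t_0)=0$, $\varphi_1(s,t_0)=0$ are common to both systems.

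The computation is entirely routine and I do not expect a genuine obstacle beyond bookkeeping. The one point worth a remark is the converse when $c=0$: then $\varphi_3(s,t_0)=0$ for every $d$, so the displayed system alone does not force $\tau/\kappa$ to be constant; in that degenerate case $\vec{r}(s)$ is a geodesic (as observed after Definition~\ref{def22}), and one either restricts attention to $c\neq 0$ or records this exception explicitly. Otherwise the whole argument is just the substitution $\tau=d\kappa$ followed by simplification.
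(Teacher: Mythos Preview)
Your proposal is correct and matches the paper's approach: the paper presents Corollary~\ref{cor34} without a separate proof, simply stating that it follows from Theorem~\ref{thm31}, which is exactly the specialization via $\tau/\kappa=d$ that you carry out. Your observation about the degenerate case $c=0$ in the converse is a valid caveat that the paper does not address.
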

	
	\begin{corollary} \label{cor35}
		Let the curve $\vec{r}(s)$ be a $D$-type curve on $\vec{P}(s,t)$. Then, $\vec{r}(s)$ is an isogeodesic curve on $\vec{P}(s,t)$ if and only if
		\[\left\{\begin{array}{l} {u(s,t_{0} )=v(s,t_{0} )=w(s,t_{0} )=0,} \\ {\varphi _{1} (s,t_{0} )=0,\, \, \, \, \, \, \varphi _{2} (s,t_{0} )=\pm 1 ,} \\ {\varphi _{3} (s,t_{0} )=0.} \end{array}\right. \] 
		hold, where $0\le t_{0} \le T,\, \, \, \, 0\le s\le L$.
	\end{corollary}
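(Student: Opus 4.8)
The plan is to obtain Corollary \ref{cor35} as the degenerate case $c=0$ of Theorem \ref{thm31}, once we fix what \emph{isogeodesic} must mean here. First I would record that an isogeodesic curve on $\vec{P}(s,t)$ is an isoparametric curve on $\vec{P}(s,t)$ which is at the same time a geodesic of $\vec{P}(s,t)$. The isoparametric half is exactly the first line $u(s,t_{0})=v(s,t_{0})=w(s,t_{0})=0$, coming from (\ref{3_1})–(\ref{3_2}) as in the proof of Theorem \ref{thm31}, so the real content of the corollary is the geodesic condition translated into the functions $\varphi_i$.

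For the necessity direction, suppose $\vec{r}(s)$ is an isogeodesic curve on $\vec{P}(s,t)$. Being a geodesic means that along $\vec{r}(s)$ the surface normal $\vec{n}$ is parallel to the principal normal $\vec{N}$; writing $\vec{n}=\cos\theta\,\vec{N}+\sin\theta\,\vec{B}$ as in Section \ref{sec:2}, this forces $\langle\vec{n},\vec{B}\rangle=\sin\theta=0$, hence by (\ref{3_7}) we get $\varphi_{3}(s,t_{0})=\sin\theta=0$ and $\varphi_{2}(s,t_{0})=\cos\theta=\pm1$, while $\varphi_{1}(s,t_{0})=0$ always by (\ref{3_4}). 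Moreover, since $\vec{r}(s)$ is a Frenet curve we have $\kappa\ne0$, so $\langle\vec{n},\vec{W}_{0}\rangle=\frac{\kappa}{\sqrt{\kappa^{2}+\tau^{2}}}\langle\vec{n},\vec{B}\rangle=0$; that is, the $D$-type constant is $c=0$. Substituting $c=0$ into the conclusion of Theorem \ref{thm31} reproduces precisely the three displayed conditions of the corollary.

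For the sufficiency direction, assume the three displayed conditions hold. The first line makes $\vec{r}(s)$ isoparametric with parameter $t_{0}$, and from (\ref{3_3}) together with $\varphi_{1}(s,t_{0})=0$, $\varphi_{2}(s,t_{0})=\pm1$, $\varphi_{3}(s,t_{0})=0$ we obtain $\vec{n}(s,t_{0})=\pm\vec{N}(s)$. Thus the surface normal and the principal normal are linearly dependent along $\vec{r}(s)$, which is exactly the characterization of a geodesic (the case $\langle\vec{n},\vec{W}_{0}\rangle=0$ noted just after Definition \ref{def22}). Hence $\vec{r}(s)$ is an isogeodesic curve on $\vec{P}(s,t)$, and the equivalence is established.

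I do not expect a genuine obstacle: the statement is a specialization of Theorem \ref{thm31} combined with the geodesic characterization already recorded in Sections \ref{sec:1}–\ref{sec:2}. The only point that deserves an explicit word is the implication $\langle\vec{n},\vec{B}\rangle=0\Rightarrow c=0$, which relies on $\kappa\ne0$ so that the coefficient $\kappa/\sqrt{\kappa^{2}+\tau^{2}}$ of $\vec{B}$ in $\vec{W}_{0}$ is nonzero; this is guaranteed throughout the paper since all curves are assumed to be Frenet curves.
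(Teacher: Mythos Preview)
Your proposal is correct and matches the paper's approach: the paper presents Corollary \ref{cor35} without an explicit proof, simply as a specialization of Theorem \ref{thm31}, and your argument is precisely this specialization to $c=0$ together with the geodesic characterization $\vec{n}\parallel\vec{N}$ noted after Definition \ref{def22}. Your additional remark that $\kappa\ne0$ ensures $\langle\vec{n},\vec{B}\rangle=0\Rightarrow c=0$ is a helpful clarification the paper leaves implicit.
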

	
	\begin{corollary} \label{cor36}
		Let the curve $\vec{r}(s)$ be a $D$-type curve on $\vec{P}(s,t)$. Then, $\vec{r}(s)$ is both isoparametric and asymptotic planar curve on $\vec{P}(s,t)$ if and only if
		\[\left\{\begin{array}{l} {u(s,t_{0} )=v(s,t_{0} )=w(s,t_{0} )=0,} \\ {\varphi _{1} (s,t_{0} )=0,\, \, \, \, \, \, \varphi _{2} (s,t_{0} )=0 }\\ {\varphi _{3} (s,t_{0} )=1.} \end{array}\right. \] 
		hold, where $0\le t_{0} \le T,\, \, \, \, 0\le s\le L$, and $\kappa ,\tau $ are the curvature and the torsion functions of  $\vec{r}(s)$, respectively.
	\end{corollary}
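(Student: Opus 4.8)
The plan is to derive Corollary~\ref{cor36} from Theorem~\ref{thm31} as the instance in which the $D$-type constant attains the extreme value $c=1$. The link is the remark recorded right after Definition~\ref{def22}: for a $D$-type curve one has $\langle\vec n,\vec W_0\rangle=\sin\theta\,\frac{\kappa}{\sqrt{\kappa^2+\tau^2}}$, a product of two quantities each bounded above by $1$, so the equality $\langle\vec n,\vec W_0\rangle=1$ can hold only if simultaneously $\sin\theta=1$ (equivalently $\vec n=\vec B$, i.e. $\vec r$ is an asymptotic curve) and $\tau\equiv0$ (i.e. $\vec r$ is a planar curve). Hence ``$\vec r$ is a $D$-type curve that is asymptotic and planar'' means exactly ``$\vec r$ is a $D$-type curve with constant $c=1$'', and Corollary~\ref{cor36} is what Theorem~\ref{thm31} asserts in that case.

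For the necessity I would argue as follows. Suppose $\vec r(s)$ is a $D$-type curve on $\vec P(s,t)$ that is isoparametric, asymptotic and planar. Isoparametricity yields, exactly as in the computation preceding Theorem~\ref{thm31}, $u(s,t_0)=v(s,t_0)=w(s,t_0)=0$, hence $\partial u(s,t_0)/\partial s=\partial v(s,t_0)/\partial s=\partial w(s,t_0)/\partial s=0$; and since the surface normal is orthogonal to $\vec T$, $\varphi_1(s,t_0)=0$. Planarity gives $\tau\equiv0$, so $\sqrt{\kappa^2+\tau^2}=\kappa$ and $\vec W_0=\vec B$. The asymptotic hypothesis says the normal curvature $\langle\vec r''(s),\vec n(s,t_0)\rangle=\kappa(s)\,\varphi_2(s,t_0)$ vanishes, and $\kappa\neq0$ forces $\varphi_2(s,t_0)=\cos\theta=0$; therefore $\vec n(s,t_0)=\pm\vec B(s)$, and after possibly reversing the orientation of the surface normal so that $\langle\vec n,\vec W_0\rangle=+1$ we obtain $\varphi_3(s,t_0)=\sin\theta=1$. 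Equivalently, one substitutes $c=1$ and $\tau\equiv0$ into the three relations of Theorem~\ref{thm31} and simplifies.

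For the sufficiency I would reverse the argument. Assume $u(s,t_0)=v(s,t_0)=w(s,t_0)=0$, $\varphi_1(s,t_0)=0$, $\varphi_2(s,t_0)=0$ and $\varphi_3(s,t_0)=1$. By (\ref{3_1}) we have $\vec P(s,t_0)=\vec r(s)$, so $\vec r$ is isoparametric, and along $\vec r$ the surface normal is $\vec n(s,t_0)=\varphi_2\vec N+\varphi_3\vec B=\vec B(s)$; consequently $\langle\vec r''(s),\vec n(s,t_0)\rangle=\kappa\langle\vec N,\vec B\rangle=0$, so $\vec r$ is an asymptotic curve on $\vec P$. For the planarity one uses the standing hypothesis that $\vec r$ is a $D$-type curve: then $\langle\vec n,\vec W_0\rangle$ is a constant whose value here is $\langle\vec B,\vec W_0\rangle=\kappa/\sqrt{\kappa^2+\tau^2}$, and forcing this constant to be the value $1$ that characterises the asymptotic-and-planar case gives $\kappa^2=\kappa^2+\tau^2$, i.e. $\tau\equiv0$; thus $\vec r$ is planar and the equivalence is complete.

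No new computation is required: the derivatives $\partial\vec P/\partial s$, $\partial\vec P/\partial t$ and the resulting $\varphi_i(s,t_0)$ are already in hand from the derivation of Theorem~\ref{thm31}, and what remains is orientation and sign bookkeeping. The step I expect to demand genuine care is the planarity in the sufficiency direction: the conditions $\varphi_2(s,t_0)=0$, $\varphi_3(s,t_0)=1$ say precisely that the surface normal along $\vec r$ equals $\vec B$, and that alone only makes $\vec r$ asymptotic --- indeed a nonplanar curve, such as a circular helix, carries a surface of the form (\ref{3_1}) (e.g. with $v(s,t)=t$, $u=w=0$) along which the surface normal coincides with $\vec B$. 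So the argument must genuinely exploit that the curve is being required to be a $D$-type curve whose constant is the extreme value $1$; pinning down that constant, rather than performing any algebra, is where the real work lies.
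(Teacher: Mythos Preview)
Your plan---specialize Theorem~\ref{thm31} to the case $c=1$, $\tau\equiv 0$---is exactly how the paper treats this corollary: no separate proof is given, and it is understood as the instance of Theorem~\ref{thm31} identified in the remark after Definition~\ref{def22}. Your necessity argument is correct and matches that specialization.

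The sufficiency direction, however, contains a genuine gap, and you have in fact already exhibited it. From $\varphi_2(s,t_0)=0$, $\varphi_3(s,t_0)=1$ you correctly obtain $\vec n(s,t_0)=\vec B$ and hence that $\vec r$ is asymptotic. The $D$-type hypothesis then gives only that $\langle\vec n,\vec W_0\rangle=\kappa/\sqrt{\kappa^2+\tau^2}$ is constant, i.e.\ that $\tau/\kappa$ is constant; it does \emph{not} force $\tau\equiv 0$. Your phrase ``forcing this constant to be the value $1$'' is circular: nothing in the hypotheses fixes the $D$-type constant at $1$. Your own circular-helix example (take $\vec P(s,t)=\vec r(s)+t\,\vec N(s)$ with $\vec r$ a circular helix) satisfies $u=v=w=0$ at $t_0=0$, $\varphi_1=0$, $\varphi_2=0$, $\varphi_3=1$, and is a $D$-type curve with constant $\kappa/\sqrt{\kappa^2+\tau^2}<1$, yet $\tau\neq 0$. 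So the sufficiency direction as stated cannot be proved; this is a defect in the corollary's formulation (shared by the paper, which offers no proof), not in your method. A correct statement would either add the hypothesis that the $D$-type constant equals $1$, or weaken the conclusion to ``asymptotic general helix''.
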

	
	\begin{corollary} \label{cor37}
		Let the curve $\vec{r}(s)$ be a $D$-type curve on $\vec{P}(s,t)$. Then, $\vec{r}(s)$ is a planar curve on $\vec{P}(s,t)$ if and only if
		\[\left\{\begin{array}{l} {u(s,t_{0} )=v(s,t_{0} )=w(s,t_{0} )=0,} \\ {\varphi _{1} (s,t_{0} )=0,\, \, \, \, \, \, \varphi _{2} (s,t_{0} )=\pm \sqrt{1 -c^{2} } ,} \\ {\varphi _{3} (s,t_{0} )=c.} \end{array}\right. \] 
		where $0\le t_{0} \le T,\, \, \, \, 0\le s\le L$ and $c$ is a real constant.
	\end{corollary}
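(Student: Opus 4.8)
The plan is to obtain Corollary~\ref{cor37} as the torsion-free specialisation of Theorem~\ref{thm31}, so the proof is an application of Theorem~\ref{thm31} combined with the classical characterisation of planar curves: a Frenet curve $\vec{r}(s)$ is a planar curve if and only if $\tau\equiv 0$. The first step is to record the effect of $\tau\equiv 0$ on the unit Darboux vector. For a Frenet curve we have $\kappa>0$, so when $\tau\equiv 0$ the expression $\vec{W}_{0}=\frac{\tau}{\sqrt{\kappa^{2}+\tau^{2}}}\vec{T}+\frac{\kappa}{\sqrt{\kappa^{2}+\tau^{2}}}\vec{B}$ reduces to $\vec{W}_{0}=\vec{B}$; equivalently $\sqrt{\kappa^{2}+\tau^{2}}/\kappa\equiv 1$. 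Hence, for a planar curve, the defining $D$-type relation $\left\langle\vec{n},\vec{W}_{0}\right\rangle=\text{constant}$ becomes $\left\langle\vec{n},\vec{B}\right\rangle=\text{constant}$.

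For the forward implication I would argue as follows. Assume $\vec{r}(s)$ is a planar $D$-type curve on $\vec{P}(s,t)$. Since $\vec{r}(s)$ is the isoparametric curve $\vec{P}(s,t_{0})$, we get $u(s,t_{0})=v(s,t_{0})=w(s,t_{0})=0$, and the computation carried out just before Theorem~\ref{thm31} gives $\vec{n}(s,t_{0})=\varphi_{2}(s,t_{0})\vec{N}+\varphi_{3}(s,t_{0})\vec{B}$ with $\varphi_{1}(s,t_{0})=0$ and $\varphi_{2}(s,t_{0})^{2}+\varphi_{3}(s,t_{0})^{2}=1$. Pairing $\vec{n}(s,t_{0})$ with $\vec{W}_{0}=\vec{B}$ and setting $c:=\left\langle\vec{n},\vec{W}_{0}\right\rangle$, I obtain $\varphi_{3}(s,t_{0})=c$ and therefore $\varphi_{2}(s,t_{0})=\pm\sqrt{1-c^{2}}$. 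This is exactly the displayed system, and it also matches what Theorem~\ref{thm31} produces once $\tau\equiv 0$ is inserted into $\varphi_{2}(s,t_{0})=\pm\sqrt{1-c^{2}(\kappa^{2}+\tau^{2})/\kappa^{2}}$ and $\varphi_{3}(s,t_{0})=c\sqrt{\kappa^{2}+\tau^{2}}/\kappa$.

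For the converse, suppose the displayed conditions hold, with $c$ the constant $\left\langle\vec{n},\vec{W}_{0}\right\rangle$. Applying Theorem~\ref{thm31} to the $D$-type curve $\vec{r}(s)$ gives $\varphi_{3}(s,t_{0})=c\sqrt{\kappa^{2}+\tau^{2}}/\kappa$; comparing this with the hypothesis $\varphi_{3}(s,t_{0})=c$ forces $\sqrt{\kappa^{2}+\tau^{2}}/\kappa\equiv 1$, that is $\tau\equiv 0$, so $\vec{r}(s)$ is a planar curve on $\vec{P}(s,t)$. (The hypothesis $\varphi_{2}(s,t_{0})=\pm\sqrt{1-c^{2}}$ is then automatically consistent, because $\varphi_{1}(s,t_{0})=0$ and $\vec{n}(s,t_{0})$ is a unit vector.) Combining the two directions gives the asserted equivalence.

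The step I expect to require the most care is precisely this final comparison. It is essential to identify the constant $c$ in the statement with $\left\langle\vec{n},\vec{W}_{0}\right\rangle$ (equivalently with $\varphi_{3}(s,t_{0})$), since it is the mismatch between $\varphi_{3}(s,t_{0})=c$ and the general value $\varphi_{3}(s,t_{0})=c\sqrt{\kappa^{2}+\tau^{2}}/\kappa$ of Theorem~\ref{thm31} that pins down $\tau\equiv 0$; one should also dispose of the degenerate value $c=0$ separately (a planar curve with $\left\langle\vec{n},\vec{W}_{0}\right\rangle=0$ is a geodesic, in agreement with Corollary~\ref{cor35}) and keep track of the $\pm$ branch of $\varphi_{2}(s,t_{0})$. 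Everything else is a direct substitution into the relations already established.
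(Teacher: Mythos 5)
Your proof follows exactly the route the paper intends: Corollary~\ref{cor37} is stated without proof as an immediate specialization of Theorem~\ref{thm31}, obtained by inserting $\tau\equiv 0$ (equivalently $\sqrt{\kappa^{2}+\tau^{2}}/\kappa\equiv 1$) into the expressions for $\varphi_{2}$ and $\varphi_{3}$, and both your forward and converse directions match that substitution argument. Your concern about the degenerate value $c=0$ is well founded, but it is a defect of the corollary as stated rather than of your argument: for $c=0$ the displayed conditions collapse to those of Corollary~\ref{cor35}, and a non-planar geodesic (e.g.\ a helix on a circular cylinder) satisfies them while having $\tau\ne 0$, so the ``if'' direction genuinely requires $c\ne 0$ --- a restriction the paper silently omits.
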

	
	Now, in order to achieve simpler calculation and analysis, we consider the case that the marching-scale functions $u(s,t),\, \, v(s,t)$ and $w(s,t)$ can be thought as the product of two single valued $C^{1} $-functions. Then we can write,
	\begin{equation} \label{3_9} 
	\left\{\begin{array}{l} {u(s,t)=l(s)U(t),} \\ {v(s,t)=m(s)V(t),\qquad 0\le s\le L,\, \, \, 0\le t\le T.} \\ {w(s,t)=n(s)W(t),} \end{array}\right.  
	\end{equation} 
	where $l(s),\, \, m(s),\, \, n(s)$ are not identically zero. Then, Theorem \ref{thm31} gives following corollary:
	
	\begin{corollary} \label{cor38}
		The curve $\vec{r}(s)$ is a $D$-type curve on the surface pencil $\vec{P}(s,t)$ if and only if
		\begin{equation} \label{3_10} 
		\left\{\begin{array}{l} {U(t_{0} )=V(t_{0} )=W(t_{0} )=0,} \\ {m(s)V'(t_{0} )=c\frac{\sqrt{\kappa ^{2} +\tau ^{2} } }{\kappa } ,} \\ {-n(s)W'(t_{0} )=\pm \sqrt{1-\frac{c^{2} (\kappa ^{2} +\tau ^{2} )}{\kappa ^{2} } } .} \end{array}\right.  
		\end{equation} 
		hold, where $c$ is a real constant, $0\le t_{0} \le T$ and $\kappa ,\, \, \tau $ are the curvature and the torsion functions of the curve $\vec{r}(s)$, respectively.
	\end{corollary}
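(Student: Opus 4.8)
The plan is to specialize Theorem~\ref{thm31} to the separated marching-scale functions~(\ref{3_9}) and translate its three defining lines one at a time. First I would handle the isoparametric conditions: inserting (\ref{3_9}) into $u(s,t_{0})=v(s,t_{0})=w(s,t_{0})=0$ gives $l(s)U(t_{0})=m(s)V(t_{0})=n(s)W(t_{0})=0$ for all $s\in[0,L]$, and since $l,m,n$ are by hypothesis not identically zero this is equivalent to $U(t_{0})=V(t_{0})=W(t_{0})=0$; the converse implication is immediate. This already yields the first line of (\ref{3_10}).

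Next I would recompute the surface normal along $\vec{r}(s)$ for the separated form. Because $v(\cdot,t_{0})$ and $w(\cdot,t_{0})$ vanish identically in $s$, their $s$-derivatives vanish at $t_{0}$, so, exactly as in the derivation preceding Theorem~\ref{thm31}, $\partial\vec{P}/\partial s\,(s,t_{0})=\vec{T}(s)$, whereas $\partial\vec{P}/\partial t\,(s,t_{0})=l(s)U'(t_{0})\vec{T}(s)+m(s)V'(t_{0})\vec{N}(s)+n(s)W'(t_{0})\vec{B}(s)$. Forming the cross product and using $\vec{T}\times\vec{N}=\vec{B}$ and $\vec{T}\times\vec{B}=-\vec{N}$, I would get $\partial_{s}\vec{P}\times\partial_{t}\vec{P}=m(s)V'(t_{0})\vec{B}(s)-n(s)W'(t_{0})\vec{N}(s)$, hence, comparing with (\ref{3_3}), $\varphi_{1}(s,t_{0})=0$ automatically and
\[\varphi_{2}(s,t_{0})=\frac{-\,n(s)W'(t_{0})}{\sqrt{[m(s)V'(t_{0})]^{2}+[n(s)W'(t_{0})]^{2}}},\qquad \varphi_{3}(s,t_{0})=\frac{m(s)V'(t_{0})}{\sqrt{[m(s)V'(t_{0})]^{2}+[n(s)W'(t_{0})]^{2}}}.\]

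Then I would impose the remaining two lines of Theorem~\ref{thm31}, namely $\varphi_{3}(s,t_{0})=c\sqrt{\kappa^{2}+\tau^{2}}/\kappa$ and $\varphi_{2}(s,t_{0})=\pm\sqrt{1-c^{2}(\kappa^{2}+\tau^{2})/\kappa^{2}}$. Since $\varphi_{2}^{2}+\varphi_{3}^{2}=1$ by (\ref{3_7}), these two requirements force the common denominator $\sqrt{[m(s)V'(t_{0})]^{2}+[n(s)W'(t_{0})]^{2}}$ to equal $1$, so the numerators themselves satisfy $m(s)V'(t_{0})=c\sqrt{\kappa^{2}+\tau^{2}}/\kappa$ and $-n(s)W'(t_{0})=\pm\sqrt{1-c^{2}(\kappa^{2}+\tau^{2})/\kappa^{2}}$, which are the last two lines of (\ref{3_10}). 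Conversely, squaring and adding those two equations gives $[m(s)V'(t_{0})]^{2}+[n(s)W'(t_{0})]^{2}=1$, so $\varphi_{2},\varphi_{3}$ take exactly the values demanded by Theorem~\ref{thm31}; invoking that theorem in both directions closes the equivalence.

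The step I expect to be the main obstacle is precisely this normalization: a priori Theorem~\ref{thm31} only constrains the \emph{normalized} components $\varphi_{2},\varphi_{3}$, hence only the ratio $m(s)V'(t_{0}):n(s)W'(t_{0})$ together with a sign, not the individual magnitudes. I would resolve it exactly as above, using the identity $\varphi_{2}^{2}+\varphi_{3}^{2}=1$ to pin the magnitude down, and note that, since the factor $\sqrt{[m(s)V'(t_{0})]^{2}+[n(s)W'(t_{0})]^{2}}$ depends on $s$ alone, one may always absorb it into $m$ and $n$ without disturbing the form (\ref{3_9}). I would also record at the outset that $\kappa\neq0$ along $\vec{r}$ (a Frenet curve), so every expression above is well defined.
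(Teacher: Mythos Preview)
The paper offers no proof for this corollary beyond the sentence ``Then, Theorem~\ref{thm31} gives following corollary,'' so your overall plan---substitute the product form~(\ref{3_9}) into the three conditions of Theorem~\ref{thm31}---is exactly the intended one, and your handling of the first line together with the cross-product computation of $\varphi_{1},\varphi_{2},\varphi_{3}$ is correct.

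The gap is in your normalization step. You assert that the identity $\varphi_{2}^{2}+\varphi_{3}^{2}=1$ forces the common denominator $D=\sqrt{[m(s)V'(t_{0})]^{2}+[n(s)W'(t_{0})]^{2}}$ to equal~$1$. But $\varphi_{2}^{2}+\varphi_{3}^{2}=1$ holds automatically, since the $\varphi_{i}$ are by definition the components of a \emph{unit} normal, and the right-hand sides $c\sqrt{\kappa^{2}+\tau^{2}}/\kappa$ and $\pm\sqrt{1-c^{2}(\kappa^{2}+\tau^{2})/\kappa^{2}}$ also square-sum to~$1$ identically; the identity therefore carries no information about~$D$. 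Consequently Theorem~\ref{thm31} constrains only the \emph{direction} of the pair $(m(s)V'(t_{0}),\,n(s)W'(t_{0}))$, not its magnitude, and the ``only if'' direction does not recover~(\ref{3_10}) for the given $m,n$. Your fallback---absorb $D(s)$ into $m$ and $n$---does show that one can always \emph{arrange} (\ref{3_10}) after a rescaling, but that is weaker than the stated equivalence with $m,n$ fixed. In fact the corollary as printed is loose in precisely this respect; the honest reading is that (\ref{3_10}) is sufficient, and necessary only up to a common $s$-dependent rescaling of $m$ and $n$ (equivalently, after normalizing so that $D\equiv 1$). Your instinct that this is ``the main obstacle'' is right; your first proposed resolution of it is not.
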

	
	As a result of the Corollary \ref{cor38}, the conditions obtained in Corollaries \ref{cor32}-\ref{cor37} can be given in the simpler form as given in (\ref{3_10}).
	
	So far, we have dealt with unit speed curves. Of course, if the curve is non-unit speed, then the method can be given by a similar way. For this case, we can give Theorem \ref{thm31} as follows:
	
	\begin{theorem}
		A given non-unit speed curve $\vec{r}(q)$ is a $D$-type curve on the surface $\vec{P}(q,t)$ if and only if
		\[\left\{\begin{array}{l} {u(q,t_{0} )=v(q,t_{0} )=w(q,t_{0} )=0,} \\ {\varphi _{1} (q,t_{0} )=0,\, \, \, \, \, \, \varphi _{2} (q,t_{0} )=\pm \frac{1}{\left\| \vec{r}'\right\| } \sqrt{1-\frac{c^{2} (\kappa ^{2} +\tau ^{2} )}{\kappa ^{2} } } ,} \\ {\varphi _{3} (q,t_{0} )=\frac{c}{\left\| \vec{r}'\right\| } \frac{\sqrt{\kappa ^{2} +\tau ^{2} } }{\kappa } .} \end{array}\right. \] 
		hold, where $0\le t_{0} \le T,\, \, \, \, 0\le q\le L$, $c$ is a real constant, $\kappa ,\, \, \tau $ are the curvature and the torsion functions of the curve $\vec{r}(q)$, respectively.
	\end{theorem}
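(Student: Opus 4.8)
The plan is to rerun the proof of Theorem~\ref{thm31} almost verbatim, the only new ingredient being the Frenet apparatus of a non-unit speed curve. For $\vec r(q)$ with speed $\|\vec r'(q)\|$, writing $\vec T=\vec r'/\|\vec r'\|$, $\vec N$, $\vec B$ as usual, the Frenet equations acquire the speed factor: $\vec T'=\|\vec r'\|\kappa\vec N$, $\vec N'=\|\vec r'\|(-\kappa\vec T+\tau\vec B)$, $\vec B'=-\|\vec r'\|\tau\vec N$, and $\vec r'=\|\vec r'\|\vec T$; the curvature $\kappa$, the torsion $\tau$ and the unit Darboux vector $\vec W_0=\frac{\tau}{\sqrt{\kappa^2+\tau^2}}\vec T+\frac{\kappa}{\sqrt{\kappa^2+\tau^2}}\vec B$ are unchanged, since they are geometric invariants.

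First I would impose $\vec P(q,t_0)=\vec r(q)$ on the parametrization~(\ref{3_1}) (now in the variable $q$), which forces $u(q,t_0)=v(q,t_0)=w(q,t_0)=0$ and, differentiating along $t=t_0$, $\partial_q u(q,t_0)=\partial_q v(q,t_0)=\partial_q w(q,t_0)=0$. Next I would compute $\partial_q\vec P$ and $\partial_t\vec P$ at $t=t_0$: with the Frenet equations above, every term carrying $u,v,w$ or their $q$-derivatives drops out, leaving $\partial_q\vec P(q,t_0)=\vec r'(q)=\|\vec r'(q)\|\,\vec T(q)$ and $\partial_t\vec P(q,t_0)=\partial_t u\,\vec T+\partial_t v\,\vec N+\partial_t w\,\vec B$. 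Forming the cross product and normalizing as in~(\ref{2_2}) gives the surface normal along the curve as $\vec n(q,t_0)=\varphi_1\vec T+\varphi_2\vec N+\varphi_3\vec B$ with $\varphi_1=0$; this is precisely the step where the factor $\|\vec r'(q)\|$ enters the coefficients $\varphi_2,\varphi_3$, and tracking it correctly is the one place that differs from the unit-speed computation.

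Finally I would write $\vec n=\cos\theta\,\vec N+\sin\theta\,\vec B$ and use $\vec n\perp\vec T$ together with the formula for $\vec W_0$: the $D$-type condition $\langle\vec n,\vec W_0\rangle=c$ then collapses to $\langle\vec n,\vec B\rangle=c\sqrt{\kappa^2+\tau^2}/\kappa$, i.e. $\sin\theta=c\sqrt{\kappa^2+\tau^2}/\kappa$ and $\cos\theta=\pm\sqrt{1-c^2(\kappa^2+\tau^2)/\kappa^2}$; substituting into the expressions for $\varphi_2,\varphi_3$ from the previous paragraph produces the three displayed conditions, carrying the $1/\|\vec r'\|$ factor. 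For the converse I would reverse the computation: the displayed conditions say exactly that $\vec r(q)$ is isoparametric on~(\ref{3_1}) and that $\vec n(q,t_0)=\pm\sqrt{1-c^2(\kappa^2+\tau^2)/\kappa^2}\,\vec N+c\frac{\sqrt{\kappa^2+\tau^2}}{\kappa}\,\vec B$, whence $\langle\vec n(q,t_0),\vec W_0\rangle=c$ is constant, so $\vec r(q)$ is a $D$-type curve. A quicker route is to reparametrize $\vec r$ by arc length $s$, with $ds/dq=\|\vec r'(q)\|$, apply Theorem~\ref{thm31} to the unit-speed reparametrization, and push the result back through the chain rule $\partial_q\vec P=\|\vec r'\|\,\partial_s\vec P$. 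Either way, the single genuine obstacle is the bookkeeping of the speed factor $\|\vec r'(q)\|$ in $\varphi_2$ and $\varphi_3$; all remaining structure is identical to the unit-speed case.
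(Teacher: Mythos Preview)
The paper does not actually prove this theorem; it merely remarks that the non-unit speed case ``can be given by a similar way'' and states the result. Your plan---rerunning the argument of Theorem~\ref{thm31} with the speed-adjusted Frenet formulas, or alternatively reparametrizing by arc length and pushing back via the chain rule---is exactly what the paper has in mind, and is correct in outline.

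One point of caution on the bookkeeping. You write that after forming the cross product and normalizing as in~(\ref{2_2}), ``the factor $\|\vec r'(q)\|$ enters the coefficients $\varphi_2,\varphi_3$.'' But at $t=t_0$ one has $\partial_q\vec P\times\partial_t\vec P=\|\vec r'\|\bigl(-\partial_t w\,\vec N+\partial_t v\,\vec B\bigr)$, and dividing by its norm cancels the common scalar $\|\vec r'\|$, leaving a unit vector with $\varphi_2^2+\varphi_3^2=1$; this is incompatible with the theorem's displayed conditions, which force $\varphi_2^2+\varphi_3^2=1/\|\vec r'\|^2$. Thus the $\varphi_i$ in this statement cannot be the components of the fully normalized $\vec n$ of~(\ref{2_2}); the paper is implicitly using a different convention here (one consistent with Corollary~\ref{cor38} and the non-unit-speed examples, where in effect $\varphi_3=\partial_t v(q,t_0)$). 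Your alternative arc-length route---apply Theorem~\ref{thm31} to the reparametrized curve and transport back through $\partial_q\vec P=\|\vec r'\|\,\partial_s\vec P$---produces the $1/\|\vec r'\|$ factor directly and sidesteps this ambiguity, so it is the safer of your two approaches.
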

	
	The corollaries given above for unit speed $D$-type curves can also be given for non-unit speed $D$-type curves as a result of the Theorem \ref{thm31}.
	
	\section{Examples}
	\label{sec:3}
	In this section, we give some examples in order to verify the method. For simpler calculations, we use conditions (\ref{3_10}) and corresponding conditions for non-unit speed curves.
	
	\begin{example}
		Let $\vec{r}(s)$ be a circle which is a planar curve given by the parameterization
		\[\vec{r}(s)=\left(\cos s,\sin s,0\right).\] 
		After a simple computation, one can find that the curvature and the torsion are $\kappa =1,\, \, \, \tau =0$, respectively and Frenet vectors are obtained as follows
		\[\left\{\begin{array}{l} {\vec{T}(s)=\left(-\sin s,\cos s,0\right),} \\ {\vec{N}(s)=\left(-\cos s,-\sin s,0\right),} \\ {\vec{B}(s)=(0,0,1).} \end{array}\right. \] 
		Now, let us choose $l(s)=m(s)=n(s)=1$ and $c=\sqrt{3} /2$. Then the conditions (\ref{3_10}) become
		\[\left\{\begin{array}{l} {U(t)=t,} \\ {V(t)=\frac{\sqrt{3} }{2} t,} \\ {W(t)=\frac{t}{2} .} \end{array}\right. \] 
		Thus, the plot of surface pencil 
		{\scriptsize \[\vec{P}{}_{1} (s,t)=\vec{r}(s)+u(s,t)\vec{T}(s)+v(s,t)\vec{N}(s)+w(s,t)\vec{B}(s), -2\pi \le s\le 2\pi ,\, \, \, 0\le t\le 5, \] }
		is given by Figure \ref{fig1}(a). On the other hand, it is possible to control the shape of the surface pencil by adding some control coefficients to the functions $u,\, \, v,\, \, w$, such as
		\[\left\{\begin{array}{l} {u(t)=xt,} \\ {v(t)=y\frac{\sqrt{3} }{2} t,} \\ {w(t)=z\frac{t}{2} ,} \end{array}\right. \] 
		where $x,\, \, y,\, \, z$ are real constants. So, by taking $x=1/5,\, \, y=1/3,\, \, z=1$, the shape of the surface pencil between the same intervals is given by Figure \ref{fig1}(b).
	\end{example}
	
	\begin{figure}
		\centering
		\begin{subfigure}{.5\textwidth}
			\centering
			\includegraphics[width=2.25in, height=2.25in, keepaspectratio=true]{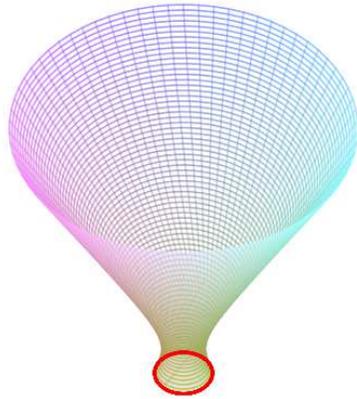}
			\caption{$x=1$, $y=1$, $z=1$}
			\label{fig1a}
		\end{subfigure}%
		\begin{subfigure}{.5\textwidth}
			\centering
			\includegraphics[width=2.25in, height=2.25in, keepaspectratio=true]{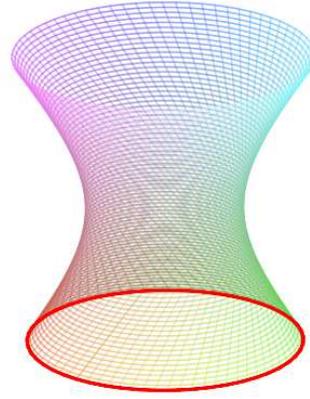}
			\caption{$x=1/5$, $y=1/3$, $z=1$}
			\label{fig1b}
		\end{subfigure}
		\caption{The surface $\vec{P}{}_{1} (s,t)$}
		\label{fig1}
	\end{figure}
	
	\begin{example}
		Let $\vec{r}(s)$ be a cylindrical helix given by the parameterization
		\[\vec{r}(s)=\left(\cos \frac{s}{\sqrt{2} } ,\sin \frac{s}{\sqrt{2} } ,\frac{s}{\sqrt{2} } \right).\] 
		By a simple calculation it is easy to find $\kappa =\tau =1/2$ and
		\[\left\{\begin{array}{l} {\vec{T}(s)=\left(-\frac{1}{\sqrt{2} } \sin \frac{s}{\sqrt{2} } ,\frac{1}{\sqrt{2} } \cos \frac{s}{\sqrt{2} } ,\frac{1}{\sqrt{2} } \right),} \\ {\vec{N}(s)=\left(-\cos \frac{s}{\sqrt{2} } ,-\sin \frac{s}{\sqrt{2} } ,0\right),} \\ {\vec{B}(s)=\left(\frac{1}{\sqrt{2} } \sin \frac{s}{\sqrt{2} } ,-\frac{1}{\sqrt{2} } \cos \frac{s}{\sqrt{2} } ,\frac{1}{\sqrt{2} } \right).} \end{array}\right. \] 
		Taking $l(s)=m(s)=n(s)=1,\, \, \, c=1/2$ and 
		\[\left\{\begin{array}{l} {U(t)=t,} \\ {V(t)=\frac{\sqrt{2}}{2}t,} \\ {W(t)=\frac{\sqrt{2}}{2}t,} \end{array}\right. \] 
		gives us the shape of the surface pencil
		{\scriptsize	\[\vec{P}_{2} (s,t)=\vec{r}(s)+u(s,t)\vec{T}(s)+v(s,t)\vec{N}(s)+w(s,t)\vec{B}(s), -2\pi \le s\le 2\pi ,\, \, \, 0\le t\le 1/4\] }
		as in Figure \ref{fig2}(a).	Considering the control coefficients $x=1,\, \, y=15,\, \, z=5$, we obtain the shape as in Figure \ref{fig2}(b) for the same intervals of $s$ and $t$.
	\end{example}
	
	\begin{figure}
		\centering
		\begin{subfigure}{.5\textwidth}
			\centering
			\includegraphics[width=2.25in, height=2.25in, keepaspectratio=true]{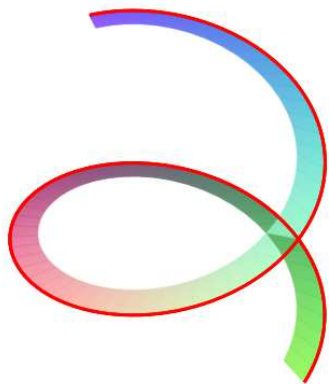}
			\caption{$x=1$, $y=1$, $z=1$}
			\label{fig2a}
		\end{subfigure}%
		\begin{subfigure}{.5\textwidth}
			\centering
			\includegraphics[width=2.25in, height=2.25in, keepaspectratio=true]{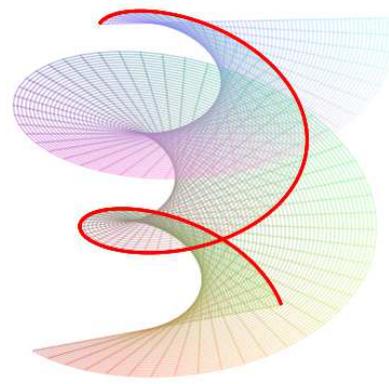}
			\caption{$x=1$, $y=15$, $z=5$}
			\label{fig2b}
		\end{subfigure}
		\caption{The surface $\vec{P}{}_{2} (s,t)$}
		\label{fig2}
	\end{figure}
	
	\begin{example}
		Let \textit{$\vec{r}(q)$} be the eight curve given by the parameterization
		\[\vec{r}(q)=\left(\sin q,\, \, \sin q\cos q,\, \, 0\right).\] 
		Notice that the curve \textit{$\vec{r}(q)$} is not a unit speed curve. Then, we use Theorem \ref{thm31} for the computation. After computing the Frenet elements, we choose $l(q)=1,$ $m(q)=1,$ $n(q)=1$ and $c=\sqrt{3} /2$. Thus, we get
		\[\left\{\begin{array}{l} {U(t)=t,} \\ {V(t)=\frac{\sqrt{3}}{2} \frac{t}{\sqrt{4\cos^4{q}-3\cos^{2}{q}+1}},} \\ {W(t)=\frac{1}{2} \frac{t}{\sqrt{4\cos^4{q}-3\cos^{2}{q}+1}}.} \end{array}\right. \] 
		Therefore, the shape of the surface
		{\scriptsize	\[\vec{P}_{3} (q,t)=\vec{r}(q)+u(q,t)\vec{T}(q)+v(q,t)\vec{N}(q)+w(q,t)\vec{B}(q), 0\le s\le 2\pi ,\, \, \, 0\le t\le 1\] }
		is given as in Figure \ref{fig3}(a). By taking the control coefficients $x=10,\, \, y=1/5,\, \, z=1$, Figure \ref{fig3}(a) becomes Figure \ref{fig3}(b).
	\end{example}
	
	\begin{figure}
		\centering
		\begin{subfigure}{.5\textwidth}
			\centering
			\includegraphics[width=2.25in, height=2.25in, keepaspectratio=true]{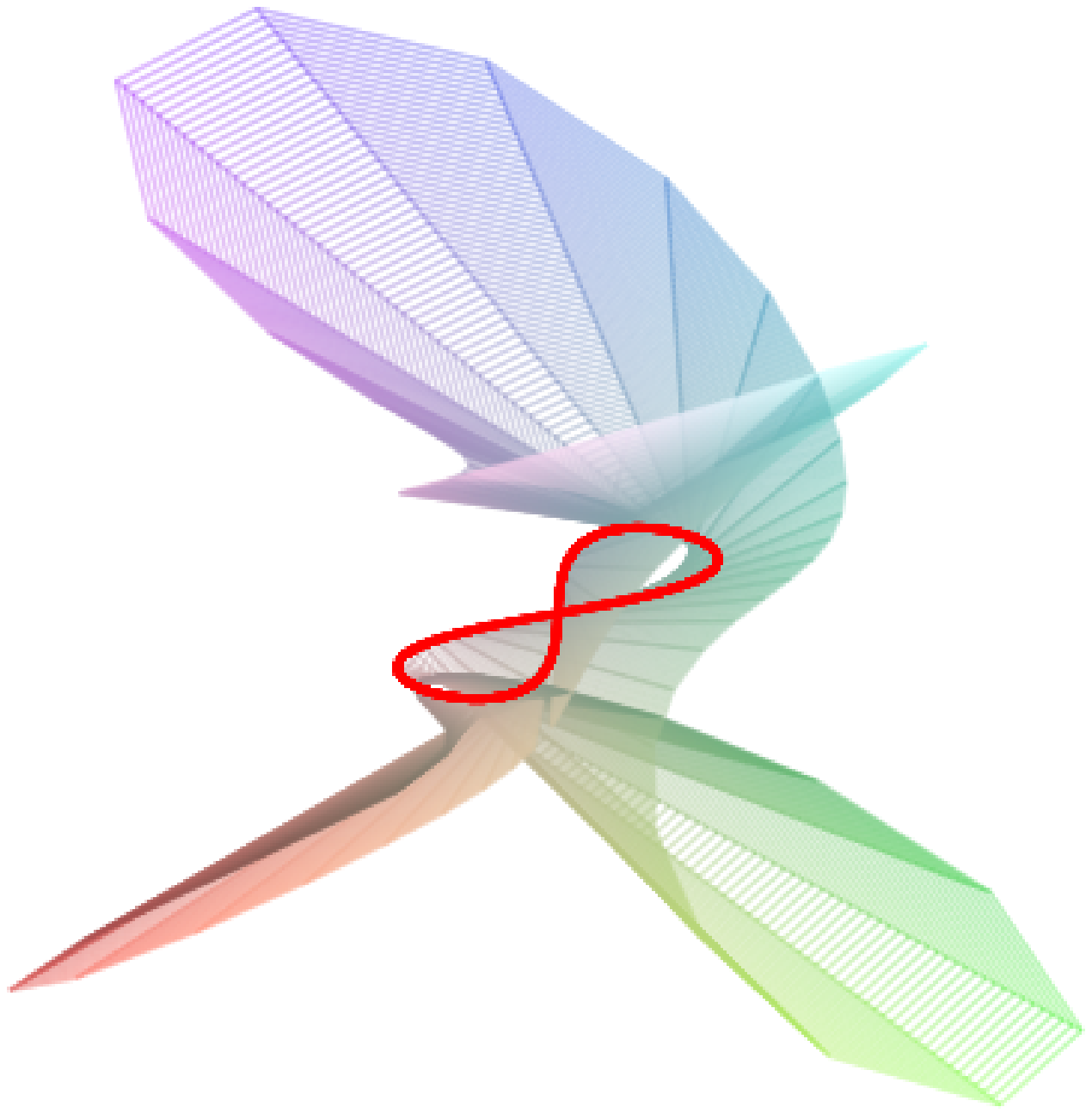}
			\caption{$x=1$, $y=1$, $z=1$}
			\label{fig3a}
		\end{subfigure}%
		\begin{subfigure}{.5\textwidth}
			\centering
			\includegraphics[width=2.25in, height=2.25in, keepaspectratio=true]{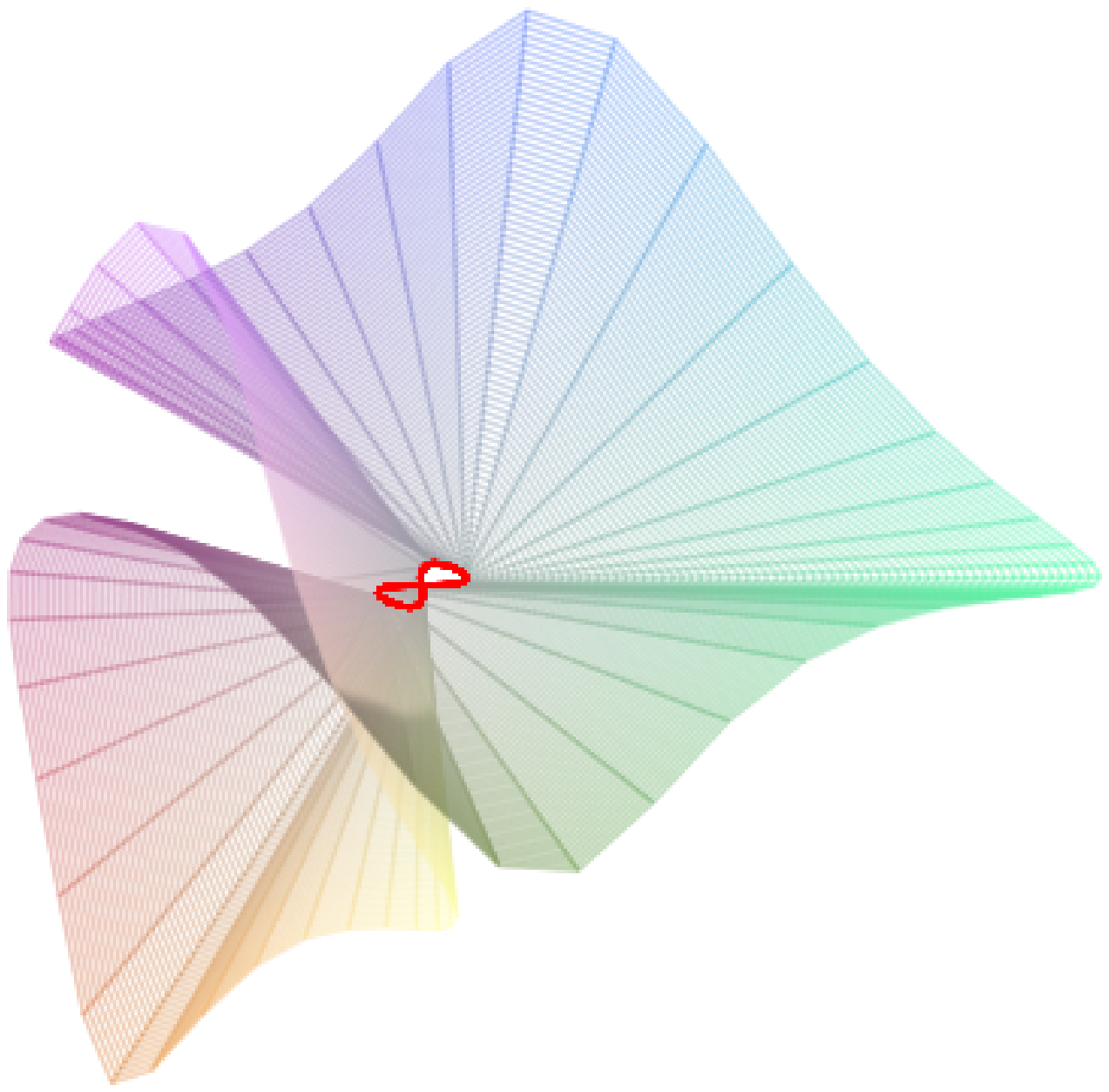}
			\caption{$x=10$, $y=1/5$, $z=1$}
			\label{fig3b}
		\end{subfigure}
		\caption{The surface $\vec{P}{}_{3} (s,t)$}
		\label{fig3}
	\end{figure}

	\begin{example}
		Let the curve $\vec{r}(q)$ be a Salkowski curve given by the parameterization
		{\scriptsize \[\vec{r}(q)=\left(\frac{5}{\sqrt{26} } \left(\frac{\sqrt{26} -26}{104+8\sqrt{26} } \sin \left(\left(1+\frac{\sqrt{26} }{13} \right)q\right)+\frac{\sqrt{26} +26}{-104+8\sqrt{26} } \sin \left(\left(1-\frac{\sqrt{26} }{13} \right)q\right)-\frac{1}{2} \sin q\right),\right. \] \[\begin{array}{l} {\frac{5}{\sqrt{26} } \left(\frac{26-\sqrt{26} }{104+8\sqrt{26} } \cos \left(\left(1+\frac{\sqrt{26} }{13} \right)q\right)-\frac{\sqrt{26} +26}{-104+8\sqrt{26} } \cos \left(\left(1-\frac{\sqrt{26} }{13} \right)q\right)-\frac{1}{2} \cos q\right),} \\ {\left. \frac{25}{4\sqrt{26} } \cos \left(\frac{\sqrt{26} }{13} q\right)\right).} \end{array}\]}
		Since the curve $\vec{r}(q)$ is not a unit speed curve, we use Theorem \ref{thm31} for the computation. After computing the Frenet elements, we choose $l(q)=1,$ $m(q)=1,$$n(q)=1$ and $c=\sqrt{3} /2$. Thus, we get
		\[\left\{\begin{array}{l} {U(t)=t,\, \, \, \, \, V(t)=\frac{\sqrt{78} }{10\cos ^{2} \left(\frac{\sqrt{26} }{26} q\right)} t,} \\ {W(t)=\frac{\sqrt{26}}{10} \frac{\sqrt{1-3\tan^{2} \left({\frac{\sqrt{26}}{26}q}\right)}}{\cos \left( {\frac{\sqrt{26} }{26}q} \right)} t.} \end{array}\right. \] 
		Therefore, the shape of the surface
		{\scriptsize	\[\vec{P}_{4} (q,t)=\vec{r}(q)+u(q,t)\vec{T}(q)+v(q,t)\vec{N}(q)+w(q,t)\vec{B}(q), 0\le q\le 2\pi ,\, \, \, 0\le t\le 1\] }
		is given as in Figure \ref{fig4}(a). By taking the control coefficients $x=-1/10,\, \, y=-1/10,\, \, z=1$, we obtain Figure \ref{fig4}(b) for the same interval of $q$ and $-4\le t\le 4$.
	\end{example}
	
	\begin{figure}
		\centering
		\begin{subfigure}{.5\textwidth}
			\centering
			\includegraphics[width=2.25in, height=2.25in, keepaspectratio=true]{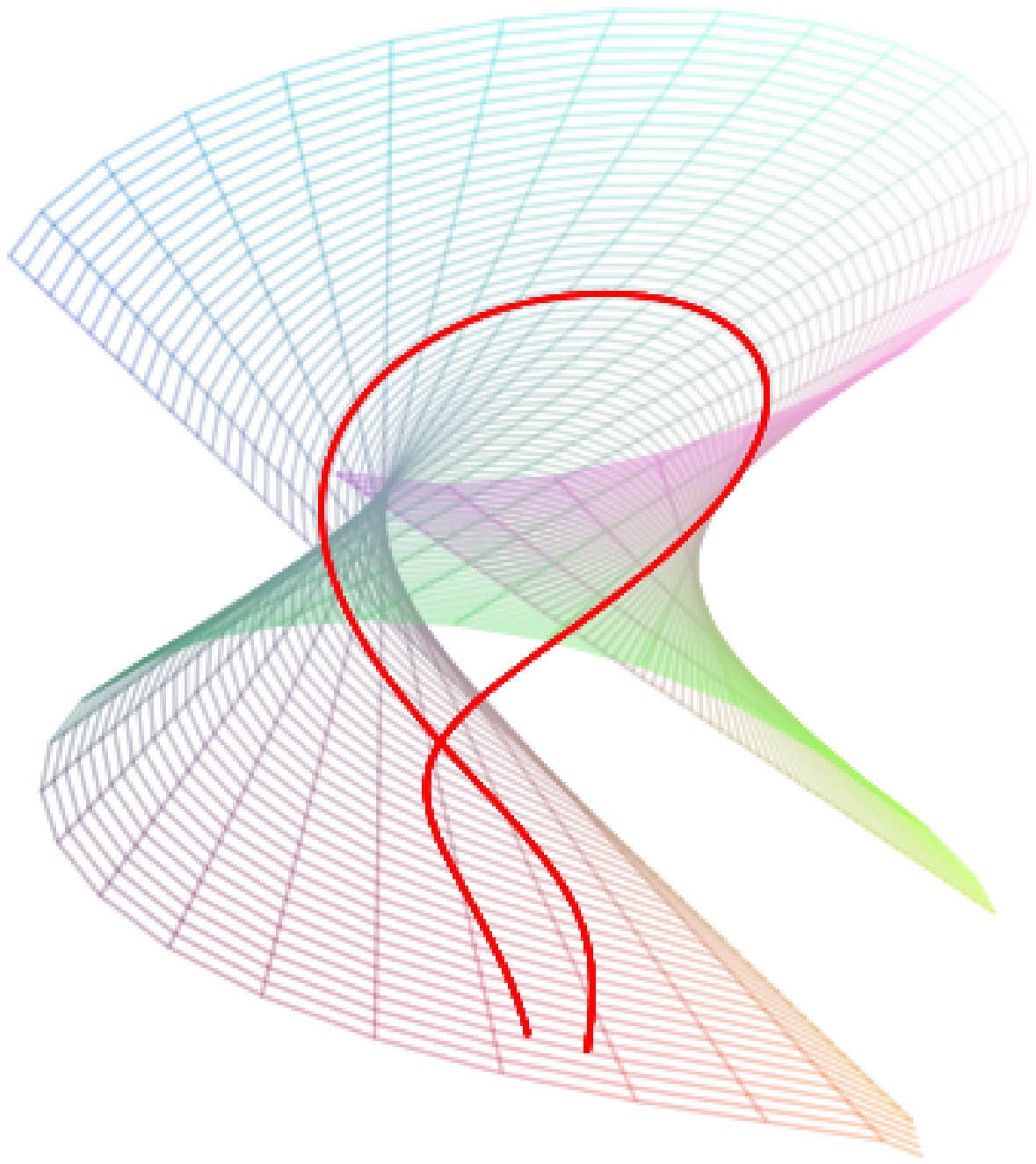}
			\caption{$x=1$, $y=1$, $z=1$}
			\label{fig4a}
		\end{subfigure}%
		\begin{subfigure}{.5\textwidth}
			\centering
			\includegraphics[width=2.25in, height=2.25in, keepaspectratio=true]{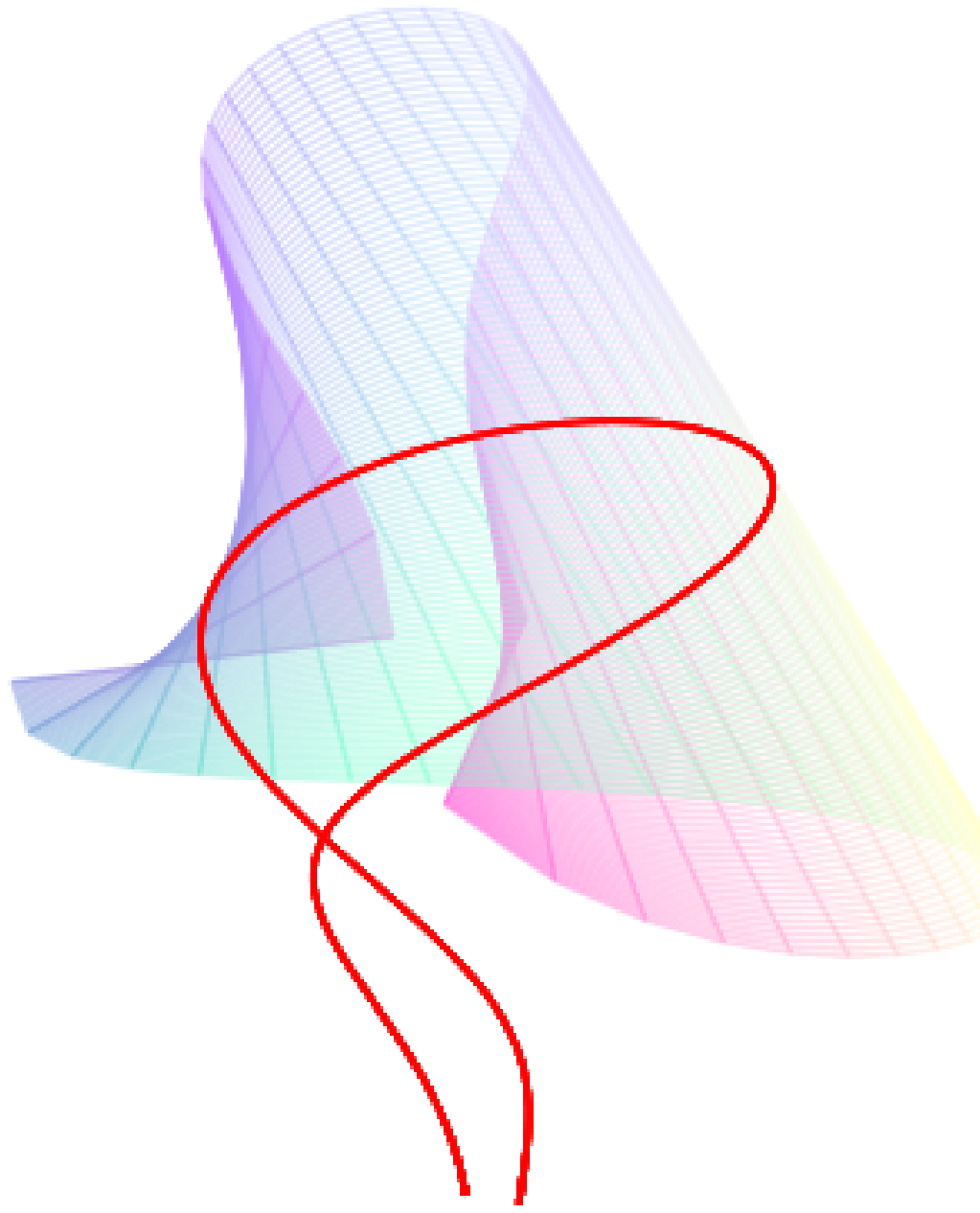}
			\caption{$x=-1/10$, $y=-1/10$, $z=-1$}
			\label{fig4b}
		\end{subfigure}
		\caption{The surface $\vec{P}{}_{4} (s,t)$}
		\label{fig4}
	\end{figure}
	
	\section{Conclusions}
	\label{sec:4}
	A new type of surface curves is defined by the property that the unit Darboux vector $\vec{W}_{0} $ of the curve and unit surface normal $\vec{n} $ along the curve satisfies the condition $\left\langle \vec{n} ,\vec{W}_{0} \right\rangle =\text{constant}$. Such curves are called $D$-type curves and it is pointed out that a $D$-type curve can be a geodesic or an asymptotic curve in some special cases. It means that $D$-type curves is a larger class of surface curves and they are used as a geometric tool to introduce a surface pencil with common $D$-type curve. The necessary and sufficient condition for a curve to be a common $D$-type curve on a surface pencil is obtained. Then, the obtained results are considered for some special cases such as the common $D$-type curve is a helix, Salkowski curve or a planar curve. By using different types of marching-scale functions, a surface pencil with a common $D$-type curve is given parametrically. 
	
	Of course, the local differential geometry and other applications of $D$-type curves to CAD are open problems. For instance, under the same condition, the curve can be thought non-isoparametric on the parametric surface. Moreover, it possible to consider $D$-type curves to construct surfaces given by implicit form $F(x,y,z)=0$.

\end{document}